\journalname{}
\newtheorem{definition}{Definition}
\newtheorem{theorem}{Theorem}
\newtheorem{corollary}{Corollary}
\newtheoremstyle{prestyle}
{0} 
{\topsep} 
{\itshape} 
{} 
{\bfseries} 
{.} 
{.5em} 
{} 
\theoremstyle{prestyle}
\theoremstyle{definition}	
\begin{document}

\begin{frontmatter}



\dochead{}

\title{Efficient Computation for Invertibility Sequence of Banded Toeplitz Matrices}


\author[]{Chen Wang}
\ead{2120220677@mail.nankai.edu.cn}
\author[]{Chao Wang\corref{cor1}}
\ead{wangchao@nankai.edu.cn}
\address[mymainaddress]{Address: College of Software, Nankai University, Tianjin 300350, China}

\cortext[cor1]{Corresponding author.}

\begin{abstract}
	
	When solving systems of banded Toeplitz equations or calculating their inverses, it is necessary to determine the invertibility of the matrices beforehand. In this paper, we equate the invertibility of an $n$-order banded Toeplitz matrix with bandwidth $2k+1$ to that of a small $k*k$ matrix. By utilizing a specially designed algorithm, we compute the invertibility sequence of a class of banded Toeplitz matrices with a time complexity of $5k^2n/2+kn$ and a space complexity of $3k^2$ where $n$ is the size of the largest matrix. This enables efficient preprocessing when solving equation systems and inverses of banded Toeplitz matrices.
\end{abstract}

\begin{keyword}
	
	
	banded Toeplitz matrix
	\sep invertibility sequence
	\sep computational complexity

\end{keyword}

\end{frontmatter}


\section{Introduction and preliminaries \label{s1}}
An $n$-order banded Toeplitz matrix with bandwidth $2k+1$ takes the form:
\begin{equation}
\label{M}
M_n = 
\begin{bmatrix}
	
	x_{0} & \cdots & x_{k} & 0 & \cdots & 0\\
	\vdots & \ddots &  & \ddots & \ddots & \vdots\\
	x_{-k} &  & \ddots &  & \ddots & 0\\
	0 & \ddots &  & \ddots & & x_{k}\\
	\vdots & \ddots & \ddots &  & \ddots & \vdots\\
	0 & \cdots & 0 & x_{-k} & \cdots & x_{0}
	
\end{bmatrix}_{n*n}.
\end{equation}
Banded Toeplitz matrices, as transition matrices for convolutions, frequently appear in the numerical solutions of partial differential equations using finite difference methods, finite element methods, and spectral methods \cite{ACETO20122960, ACETO20123857, poletti2021superfast}. They can be applied in the mathematical representation of high-dimensional nonlinear electromagnetic interference signals \cite{patil2008eficient,tang2006spectral}.

Currently, the majority of computations on banded Toeplitz matrices, including solving equation systems \cite{kumar1985fast, liu2020fast} and inverses \cite{arslan2013positive, lv2008note, wang2015explicit}, require that the banded Toeplitz matrices are non-singular, meaning the determination of their invertibility is necessary. At present, most methods determine the invertibility by calculating determinants \cite{cinkir2014fast, jia2016homogeneous, jia2023cost, kilic2008computational, talibi2018numerical}. This leads to computational waste because we are not concerned with the specific value of their determinants.

Based on the reasons mentioned above, we propose a rapid determination method for the invertibility of banded Toeplitz matrices. We equate the invertibility of an $n$-order banded Toeplitz matrix with bandwidth $2k+1$ to that of a small $k*k$ matrix and introduce a new algorithm, which can solve the invertibility sequence of a class of banded Toeplitz matrices with time complexity $5k^2n/2+kn$ and space complexity $3k^2$ where $n$ is the size of the largest matrix. Additionally, while most other studies focus on tridiagonal or pentadiagonal Toeplitz matrices, our algorithm is compatible with larger values of $k$. This enables efficient preprocessing when solving equation systems and inverses of banded Toeplitz matrices. Since the invertibility of banded Toeplitz matrices over finite fields is equivalent to the reversibility of one-dimensional null-boundary linear cellular automata \cite{MARTINDELREY20118360}, we compare our determination algorithm with the latest equivalent invertibility determination algorithms in cellular automata \cite{du2022efficient}. The results show that our algorithm has significant efficiency advantages.

\begin{table}[h]
\center
\caption{Comparison of complexity with equivalent algorithm}
\label{table1}
\begin{tabular}{p{3cm}p{3cm}}
	\hline
	algorithm & time complexity\\
	\hline
	SBP \cite{du2022efficient}  & $O(k^3n)$    \\
	ours      & $5k^2n/2+kn$    \\
	\hline
\end{tabular}
\end{table}

This paper consists of three sections. The second section presents the main theorem and algorithm of this paper. The third section summarizes the work of the entire paper.

\section{Invertibility of banded Toeplitz matrices}
The invertibility of a matrix is a simpler problem than the determinant, so we aim to completely compute the invertibility sequence for a class of banded Toeplitz matrices:

\begin{definition}
The \textbf{invertibility sequence} is a binary sequence of length $n$ where the $i$-th number indicates whether the $i$-th order banded Toeplitz matrix is invertible.
\end{definition}

Next, we will begin our calculations, assuming the bandwidth is $2k+1$. Consider the following sequence:
\begin{equation}
\label{eq1}
v_{i,j}\ (-k \le i \le n+k, 1 \le j \le k)=
\begin{cases}
	0  & \text{ for } i \le k \text{ and } i \ne j \\
	1  & \text{ for } i = j\\
	-(x_{k-1}v_{i-1,j}+x_{k-2}v_{i-2,j}+ \cdots +x_{-k}v_{i-2k,j})/x_{k} & \text{ for } i>k
\end{cases}.
\end{equation}
Let $W_i$ be a $k*k$ matrix, with its elements as described in Eq. \ref{eq2}.
\begin{equation}
\label{eq2}
W_i=\begin{bmatrix}
	v_{i+1,1} & v_{i+1,2} & \cdots & v_{i+1,k}\\
	v_{i+2,1} & v_{i+2,2} & \cdots & v_{i+2,k}\\
	\vdots & \vdots & \ddots & \vdots\\
	v_{i+k,1} & v_{i+k,2} & \cdots & v_{i+k,k}
\end{bmatrix}.
\end{equation}
We have the following important theorem:

\begin{theorem}
\label{t1}
An $n$-order banded Toeplitz matrix $M_n$ with bandwidth $2k+1$ ($n>k$) is invertible if and only if $W_n$ is invertible.
\end{theorem}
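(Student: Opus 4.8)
The plan is to prove the equivalence by exhibiting an explicit linear isomorphism between the null spaces $\ker M_n$ and $\ker W_n$: once $\dim\ker M_n=\dim\ker W_n$, invertibility of either matrix is the same statement $\dim\ker=0$. Write $V$ for the $n\times k$ matrix with $V_{q,j}=v_{q,j}$ for $1\le q\le n$, $1\le j\le k$; the map I aim to use is $c\mapsto Vc$, carrying $\ker W_n$ onto $\ker M_n$.

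First I would restate the recursion \eqref{eq1}. For $i>k$ its third case is exactly $\sum_{\ell=-k}^{k}x_{\ell}\,v_{i-k+\ell,j}=0$, and substituting $p=i-k\ (\ge 1)$ and $q=i-k+\ell$ turns it into the Toeplitz relation
\[
\sum_{q=p-k}^{p+k}x_{q-p}\,v_{q,j}=0 \qquad\text{for all } p\ge 1 .
\]
Two elementary facts accompany it: $v_{q,j}=0$ whenever $q\le 0$ (then $q\le k$ and $q\ne j$ because $j\ge 1$), and the leading $k\times k$ block of $V$ is the identity matrix, so $V$ has rank $k$ and $c\mapsto Vc$ is injective.

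The computational core is the identity $M_nV=\begin{bmatrix}0\\ -AW_n\end{bmatrix}$, in which the zero block has $n-k$ rows and $A$ is the $k\times k$ lower-triangular matrix with $A_{r,s}=x_{k-r+s}$ for $s\le r$ and $A_{r,s}=0$ otherwise. To establish it I would write out the $p$-th row of $M_nV$: pairing it with the $j$-th column of $V$ gives $\sum_{q}x_{q-p}v_{q,j}$ summed over $q\in[\max(1,p-k),\min(n,p+k)]$, which I compare with the Toeplitz relation. When $1\le p\le n-k$ the only indices dropped from the full range $[p-k,p+k]$ are $q\le 0$, where $v_{q,j}=0$, so the row vanishes. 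When $p=n-k+r$ with $1\le r\le k$, the dropped indices are $q=n+1,\dots,n+r$; since $v_{n+s,j}$ is the $(s,j)$ entry of $W_n$, the $(p,j)$ entry comes out to $-\sum_{s=1}^{r}x_{k-r+s}\,v_{n+s,j}=(-AW_n)_{r,j}$. The diagonal of $A$ consists of $x_k\ne 0$ (the recursion divides by $x_k$), so $A$ is invertible; hence $M_n(Vc)=0$ precisely when $W_nc=0$, giving the inclusion $V(\ker W_n)\subseteq\ker M_n$.

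For the opposite inclusion I would take $w\in\ker M_n$ and extend it to a sequence $\tilde w$ on $1-k\le q\le n$ by setting $\tilde w_q=0$ for $q\le 0$ and $\tilde w_q=w_q$ otherwise. Rows $1,\dots,n-k$ of the equations $M_nw=0$ say exactly that $\tilde w$ satisfies the Toeplitz relation for $p=1,\dots,n-k$; but a sequence on $[1-k,n]$ with that property is uniquely determined by its restriction to $[1-k,k]$, because the relation (using $x_k\ne 0$) solves successively for $\tilde w_{k+1},\dots,\tilde w_n$. Since $\tilde w$ and $\sum_{j=1}^{k}w_j\,v_{\cdot,j}$ agree on $[1-k,k]$ and both obey the relation there, they coincide on all of $[1-k,n]$; restricting to $[1,n]$ yields $w=Vc$ with $c=(w_1,\dots,w_k)^{\top}$, and then $M_nVc=0$ forces $W_nc=0$. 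Therefore $\ker M_n=V(\ker W_n)$, and the injectivity of $V$ gives $\dim\ker M_n=\dim\ker W_n$, so $M_n$ is invertible if and only if $W_n$ is. I expect the main obstacle to be the index bookkeeping behind $M_nV=\begin{bmatrix}0\\ -AW_n\end{bmatrix}$ — identifying exactly the tail terms in the bottom $k$ rows and the precise triangular form of $A$ — together with making the extension-and-uniqueness argument of the last paragraph watertight at the boundary.
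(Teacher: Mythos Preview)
Your proof is correct. Both your argument and the paper's rest on the same computational identity
\[
M_n V \;=\; \begin{bmatrix} 0_{(n-k)\times k} \\ -A\,W_n \end{bmatrix},
\]
(the paper writes $Q=-A$ and $P=QW_n$), so the heart of the matter is identical and your index bookkeeping for the bottom $k$ rows matches theirs exactly. The divergence is in how the identity is exploited. The paper runs the two implications separately: necessity by a rank--preservation argument (invertible $M_n$ times a column-full-rank matrix stays column-full-rank, so $P$ and hence $W_n$ is invertible), and sufficiency by right-multiplying by $P^{-1}$ to obtain the last $k$ columns of $M_n^{-1}$ and then filling in the remaining columns via a back-recursion that divides by $x_{-k}$. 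You instead package both directions into a single statement, showing that $c\mapsto Vc$ is a linear isomorphism $\ker W_n\to\ker M_n$ and concluding from $\dim\ker M_n=\dim\ker W_n$. Your route is a bit more conceptual, handles both directions uniformly, and as a side benefit never invokes $x_{-k}\neq 0$ --- only $x_k\neq 0$, which is already forced by the recursion defining the $v_{i,j}$.
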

\begin{proof}
Let matrices $P$ and $Q$ be defined as follows:
\begin{equation}
	P = QW_n = 
	\begin{bmatrix}
		-x_{k} & 0 & \cdots & 0\\
		-x_{k-1} & -x_{k} & \cdots & 0\\
		\vdots & \vdots & \ddots & \vdots\\
		-x_{1} & -x_{2} & \cdots & -x_{k}
	\end{bmatrix}
	\begin{bmatrix}
		v_{n+1,1} & v_{n+1,2} & \cdots & v_{n+1,k}\\
		v_{n+2,1} & v_{n+2,2} & \cdots & v_{n+2,k}\\
		\vdots & \vdots & \ddots & \vdots\\
		v_{n+k,1} & v_{n+k,2} & \cdots & v_{n+k,k}
	\end{bmatrix}.
\end{equation}
We have the following equation, where $I_k$ is the $n$-order identity matrix. 
\begin{equation}
	\label{main}
	M_n
	\left[\begin{array}{ccc}
		&  & \\
		& I_k & \\
		&  & \\
		\hdashline
		v_{k+1,1}& \cdots & v_{k+1,k}\\
		\vdots & \ddots & \vdots\\
		v_{n,1}& \cdots & v_{n,k}\\
	\end{array}\right]
	=
	\left[\begin{array}{ccc}
		&  & \\
		& O_{(n-k)*k} & \\
		&  & \\
		\hdashline
		&  & \\
		& P_{k*k} & \\
		&  & \\
	\end{array}\right].
\end{equation}

First, we prove the necessity: if $M_n$ is an invertible matrix, then when it is multiplied by a column full-rank matrix, the result is a column full-rank matrix. The rank of $P$ is $k$, so $P$ is invertible.

Next, we prove the sufficiency: if $W_n$ is invertible, then $P$ is also invertible. Multiplying both sides of the Eq. \ref{main} by $P^{-1}$ on the right, there exist 
$k$ vectors $U_{n-k+1},U_{n-k+2},\cdots U_{n}$ that make:
\begin{equation}
	M_n[U_{n-k+1},U_{n-k+2},\cdots, U_{n}] 
	=
	\left|\begin{array}{ccc}
		0  \\
		\hline
		I_k \\
	\end{array}\right|,
\end{equation}

At this point, we can construct a unique matrix $U= [U_{1},U_{2},\cdots,U_{n}$] such that $M_nU=I_n$, where for $i>n$, $U_i=0$ and $E_i$ is the vector of order $i$ of the canonical basis of $\mathbb{K}^n$.
\begin{equation}
	U_{i}= (E_{i-k}-x_{-k+1}U_{i+1}-x_{-k+2}U_{i+2}-\cdots-x_{k}U_{i+2k})/x_{-k} .
\end{equation}
\end{proof}

We have equated the invertibility of a banded Toeplitz matrix with bandwidth $2k+1$ to the invertibility of a small $k*k$ matrix, which significantly enhances our computational efficiency.

\begin{corollary}
The invertibility sequence of tridiagonal Toeplitz matrices can be calculated with a time complexity of $3n$.
\end{corollary}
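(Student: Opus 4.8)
The plan is to specialize Theorem~\ref{t1} to $k=1$ and then count the cost of the resulting scalar recurrence. When $k=1$ the bandwidth is $3$, and the matrix $W_i$ of Eq.~\ref{eq2} collapses to the $1\times1$ matrix $[\,v_{i+1,1}\,]$; such a matrix is invertible exactly when $v_{i+1,1}\neq 0$. Hence, for every $i>1$, Theorem~\ref{t1} tells us that $M_i$ is invertible if and only if $v_{i+1,1}\neq 0$, while the single case $i=1$ (not covered by the theorem, which assumes $n>k$) is settled directly, since $M_1=[x_0]$ is invertible iff $x_0\neq 0$. Thus the whole invertibility sequence is obtained by testing for zero the numbers $x_0$ and $v_{2,1},v_{3,1},\dots,v_{n+1,1}$.

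Next I would look at the recurrence in Eq.~\ref{eq1} in this case. With $j=1$ its initial data are $v_{-1,1}=v_{0,1}=0$ and $v_{1,1}=1$, and for $i>1$
\[
v_{i,1}=-\bigl(x_{0}v_{i-1,1}+x_{-1}v_{i-2,1}\bigr)/x_{1}.
\]
This is a linear recurrence of order $2k=2$, so only the two most recent values ever need to be kept, which already gives $O(1)$ space. If we precompute once the two constants $a=-x_{0}/x_{1}$ and $b=-x_{-1}/x_{1}$ — a cost independent of $n$ — then each new term is produced as $v_{i,1}=a\,v_{i-1,1}+b\,v_{i-2,1}$, namely two multiplications and one addition, i.e. three arithmetic operations. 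Generating $v_{2,1},\dots,v_{n+1,1}$ therefore costs $3n+O(1)$ operations; emitting the $i$-th bit then only requires comparing the appropriate value with $0$, which is not counted among the arithmetic operations. This yields the stated $3n$ bound, slightly better than the $7n/2$ one would obtain by setting $k=1$ in the general estimate $5k^2n/2+kn$, the saving coming from the fact that forming and testing the $1\times1$ matrix $W_i$ is free.

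The only point that needs care — and the main (mild) obstacle — is that the recurrence, and with it the whole reduction through Theorem~\ref{t1}, presupposes $x_{1}\neq 0$ (and, symmetrically, $x_{-1}\neq 0$, which is used when constructing $U$ in the proof of Theorem~\ref{t1}); but $x_{\pm1}\neq 0$ is precisely what makes a tridiagonal Toeplitz matrix genuinely of bandwidth $3$, so it is part of the hypothesis on the class considered here. A secondary bookkeeping detail is the separate treatment of the first entry of the sequence, already disposed of above. With these observations in place, the corollary follows by carrying out the operation count just sketched.
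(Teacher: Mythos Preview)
Your proof is correct and follows the same route as the paper: specialize Theorem~\ref{t1} to $k=1$, so that $W_i$ is the scalar $v_{i+1,1}$, and then count the cost of the scalar recurrence from Eq.~\ref{eq1}. You are in fact more careful than the paper's one-line argument, since you make explicit why each recurrence step costs three operations (by precomputing $-x_0/x_1$ and $-x_{-1}/x_1$), treat the boundary case $i=1$ separately, and record the standing hypothesis $x_{\pm 1}\neq 0$; the paper simply asserts that computing $W_1,\dots,W_n$ via Eq.~\ref{eq1} costs $3n$.
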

\begin{proof}
In this case, $k=1$, which means the invertibility determining matrix $W$ consists of a single element. We only need to determine whether it is zero. The cost of calculating $W_1$ to $W_n$ using Eq. \ref{eq1} is $3n$.
\end{proof}

\begin{corollary}
The invertibility sequence of pentadiagonal Toeplitz matrices can be calculated with a time complexity of $11n$.
\end{corollary}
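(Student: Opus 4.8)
The plan is to specialise Theorem~\ref{t1} to $k=2$ and then carry out an operation count in the same spirit as the preceding corollary, but keeping careful track of which intermediate quantities can be recycled. With bandwidth $2k+1=5$ the matrix $W_i$ is $2\times 2$, so for $i>2$ the $i$-th bit of the invertibility sequence equals $1$ exactly when
\[
\det W_i \;=\; v_{i+1,1}\,v_{i+2,2}-v_{i+1,2}\,v_{i+2,1}\;\neq\;0 ,
\]
while the two exceptional cases $M_1,M_2$ are decided in $O(1)$. Thus the whole sequence is obtained by (a) generating the scalars $v_{i,1},v_{i,2}$ for $3\le i\le n+2$ from Eq.~\ref{eq1}, after the one-time precomputation of the four normalised coefficients $c_1=-x_1/x_2,\,c_2=-x_0/x_2,\,c_3=-x_{-1}/x_2,\,c_4=-x_{-2}/x_2$, and (b) testing each $\det W_i\neq 0$ for $3\le i\le n$.

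For step (a) I would run the genuine four-term recurrence only on the first column, writing $v_{i+2,1}=c_1v_{i+1,1}+s_i$ with $s_i:=c_2v_{i,1}+c_3v_{i-1,1}+c_4v_{i-2,1}$, and I would retain the quantity $s_i$; this costs $4$ multiplications and $3$ additions per new index. The second column is then produced by the identity
\[
v_{i,2}=v_{i-1,1}+c_1\,v_{i-1,2},
\]
valid for every index, which one checks by induction, comparing the two solutions of the common recurrence at the seed values $v_{1,1}=1$, $v_{2,2}=1$; this is a single multiplication and addition per index. For step (b) I would avoid recomputing the entry $v_{i+2,2}$: substituting the identity into the determinant gives $\det W_i=v_{i+1,1}^2-v_{i+1,2}\,s_i$, so that testing $\det W_i\neq 0$ becomes the comparison $v_{i+1,1}^2\neq v_{i+1,2}\,s_i$, which costs only $2$ multiplications and a comparison because $s_i$ is already in hand. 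Summing $7$ (first column) $+\,2$ (second column) $+\,2$ (determinant test) $=11$ arithmetic operations per index over the $\Theta(n)$ indices involved, and discarding the $O(1)$ start-up (the first few indices, where the recurrence window is not yet full, are in fact cheaper), the total cost is $11n$; the identical bookkeeping with $k=1$ recovers the $3n$ of the preceding corollary.

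The main obstacle is precisely this bookkeeping rather than any deep fact: one has to establish the cross-column identity $v_{i,2}=v_{i-1,1}+c_1v_{i-1,2}$, and then notice that the awkward entry $v_{i+2,2}$ appearing in $\det W_i$ can be eliminated in favour of the partial sum $s_i$ already formed while advancing the first-column recurrence, so that the constant is $11$ and not the $12$ produced by the naive ``two independent four-term recurrences plus a fresh $2\times2$ determinant at every step''. One also has to confirm that the boundary indices $i\le 2k$ (incomplete recurrence window) and the tail indices $i=n+1,n+2$ affect only the $O(1)$ term and not the coefficient of $n$.
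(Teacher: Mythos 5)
Your proposal is correct and rests on the same skeleton as the paper's argument: specialize Theorem~\ref{t1} to $k=2$, generate the two sequences $v_{i,1},v_{i,2}$ of Eq.~\ref{eq1} by the four-term recurrence, and decide each order by an $O(1)$ test on the $2\times 2$ matrix $W_i$. Where you genuinely differ is in how the constant $11$ is assembled. The paper's count is coarser: it charges $2k+1=5$ operations per entry of $W$ (hence $10n$ for both columns together) and one ratio comparison per matrix (hence $n$), arriving at $11n$ with no further structure. You count multiplications and additions, under which the naive scheme would exceed $11$ per index, and you recover $11$ by two extra observations: the cross-column identity $v_{i,2}=v_{i-1,1}+c_1v_{i-1,2}$, which is indeed valid (both sides satisfy the common recurrence for $i\ge 4$ and agree at $i=0,1,2,3$, exactly the seed-plus-induction check you sketch), and the resulting cancellation $\det W_i=v_{i+1,1}^2-v_{i+1,2}\,s_i$, which eliminates the second-column entry $v_{i+2,2}$ from the test. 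So your route proves a somewhat sharper statement under a stricter operation-counting convention, at the price of an auxiliary identity the paper never needs; the paper's route is simpler but leans on its looser convention of $2k+1$ operations per entry and a one-operation ratio test. Your explicit treatment of the boundary cases $M_1,M_2$ (Theorem~\ref{t1} assumes $n>k$) is a detail the paper's proof glosses over, and it indeed affects only the $O(1)$ term.
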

\begin{proof}
In this case, $k=2$,  which means the invertibility determining matrix $W$ is a $2*2$ matrix. We only need to determine if the ratio of the two elements in each row of $W$ is equal to that in the next row, which costs $n$. The cost of calculating  $W_1$ to $W_n$ using Eq. \ref{eq1} is $10n$. Therefore, the total cost is $11n$.
\end{proof}

\begin{corollary}
The invertibility sequence of $(2k+1)$-diagonal Toeplitz matrices can be calculated with time complexity $5k^2n/2+kn$ and space complexity $3k^2$.
\end{corollary}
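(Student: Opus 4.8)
The plan is to account separately for the two kinds of work the algorithm must perform: generating the auxiliary table $v_{i,j}$ of Eq.~\ref{eq1}, and, via Theorem~\ref{t1}, testing the invertibility of each determining matrix $W_i$. The matrices $M_1,\dots,M_k$ lie outside the hypothesis $n>k$ of Theorem~\ref{t1}; but these are $k$ fixed matrices of size at most $k$, whose invertibility can be settled once in $O(k^4)$ time, a quantity independent of $n$ that is absorbed into the bounds below. For every $i>k$, invertibility of $M_i$ is equivalent to that of $W_i$, and this equivalence is what the algorithm exploits.

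For the first kind of work, each entry of row $v_i$ is computed from Eq.~\ref{eq1} once rows $v_{i-1},\dots,v_{i-2k}$ are available, at a cost of $2k$ multiplications and one division; the $k$ entries of the row therefore cost $2k^2+k$ operations. The rows that must be produced in order to expose $W_1,\dots,W_n$ are exactly $v_{k+1},\dots,v_{n+k}$, so this stage costs $(2k^2+k)n = 2k^2n+kn$ operations, consistent with the $3n$ and $10n$ figures obtained for $k=1$ and $k=2$. Since the recurrence has order $2k$, only the last $2k$ rows need be retained, using $2k^2$ memory, and this rolling window already contains the current $W_i$ as a sub-block.

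For the second kind of work the governing observation is that $W_{i+1}$ is obtained from $W_i$ by deleting its first row $v_{i+1}$ and appending the new row $v_{i+k+1}$, so successive determining matrices share $k-1$ of their $k$ rows. I would therefore not run a fresh Gaussian elimination on each $W_i$ (which would reproduce the $O(k^3n)$ cost of the algorithm we compare against), but instead carry an updatable triangular reduction of $W_i$ in one further $k\times k$ array and pass from step $i$ to step $i+1$ by a single row deletion and insertion, at a cost on the order of $k^2/2$ operations --- the cost of one triangular substitution in dimension $k$ --- with $W_i$ singular precisely when a zero appears on the diagonal of the reduced form. Over the at most $n$ steps this adds at most $k^2n/2$ operations, for a total of $2k^2n + kn + k^2n/2 = 5k^2n/2 + kn$, while the extra array adds $k^2$ to the $2k^2$ already in use, giving $3k^2$. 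I expect the genuine difficulty to be exactly the design and amortized analysis of this incremental reduction: one must guarantee that a slide costs only $O(k^2)$ rather than $O(k^3)$, handle the indices at which $W_i$ is singular so that the rank may drop and later recover without any rebuild from scratch, and fit everything into $O(k^2)$ additional memory, with no room to store the accumulated row operations in full.
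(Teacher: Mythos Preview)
Your proposal is correct and matches the paper's approach almost line for line: the same $2k^2n+kn$ for generating the rows $v_{k+1},\dots,v_{n+k}$ from the order-$2k$ recurrence, the same $k^2n/2$ budget for the incremental rank test, and the same $2k^2+k^2=3k^2$ space split between a rolling $2k\times k$ window for the $v$-table and one extra $k\times k$ workspace.

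The paper's concrete mechanism for the step you correctly flag as the real difficulty is this: alongside $W_i$ keep a matrix $Y_i$ in \emph{quasi-row-echelon form} (row-echelon up to row permutation) with the invariant that row $j$ of $Y_i$ is a linear combination of rows $j,\dots,k$ of $W_i$ only. This invariant is preserved by always using the newer (lower-index-in-$W_i$) row to eliminate the older one whenever two rows have the same leading position. Because the oldest row of $W_i$ then participates in only the top row of $Y_i$, discarding it leaves the remaining $k-1$ rows of $Y_i$ already in quasi-echelon form; absorbing the single new row $v_{i+k+1}$ requires at most $k$ one-row eliminations, each touching at most $k$ entries, for the claimed $O(k^2)$ per step. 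Singularity is read off as the presence of an all-zero row, and a zero row is simply overwritten when it ages out, so no rebuild is ever needed.
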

\begin{proof}
Calculating the invertibility sequence for banded Toeplitz matrices involves determining the invertibility of $W_{1}, W_{2},\cdots W_{n}$, where each $W$ is a $k*k$ matrix. If Gaussian elimination is used for each matrix to row-echelon form, the time complexity could reach $O(k^3n)$. Since $k-1$ rows are the same between $W_{i}$ and $W_{i+1}$, this means that the computations performed on $W_{i}$ can be utilized for $W_{i+1}$. This reuse of calculations can optimize the overall process and reduce the computational cost.

Consider the initial situation, set $W_{0}=Y_{0}=I$, where $I$ is the identity matrix, a row-echelon form of a full-rank matrix. By removing the first row of $W_{0}$ and adding the bottom row of $W_1$, a new matrix $Y_{1}$ is formed. It can be seen that the invertibility of $W_{1}$ and $Y_{1}$ is the same. At this point, the first $k-1$ rows of $Y_{1}$ are \textbf{quasi-row-echelon which can be transformed into row-echelon form through row swapping}. Assuming that $index[a]$ is the index of the first non-zero element from the left in the $a$-th row of each $Y$, there are two scenarios:
\begin{itemize}
	\item If $\forall i \in \mathbb{Z}, 1 \leq i \leq k$, $index[k] \ne index[i]$, this indicates that $Y_{i}$ is quasi-row-echelon. In this scenario, we can quickly determine the invertibility of $Y_1$, whether there exists a row that is entirely zero.
	
	\item If $\exists i \in \mathbb{Z}, 1 \leq i \leq k$, $index[k] = index[i]$, then use the $k$-th row to perform Gaussian elimination on the $i$-th row ($k>i$). After Gaussian elimination, $index[i]$ will grow and be updated. If at this point, $\exists j \in \mathbb{Z}, 1 \leq j \leq k$, $index[j] = index[i]$, then use the lower row to perform Gaussian elimination on the upper one. Continue this process until $Y_{1}$ achieves a quasi-row-echelon form. Then, we can quickly determine the invertibility of $Y_{1}$.
\end{itemize}
\begin{algorithm}[h]
	\footnotesize
	\SetAlgoLined
	\label{a1}
	\KwData{vector $X=[x_{-k},\cdots,x_{0},\cdots,x_{k}]$ and the size $n$ of the largest matrix}
	\KwResult{invertibility sequence $R$}
	\tcp{\textbf{Step1}: initialize $W_0$ and $Y_0$}
	\For{$i=1$;$i\leq k$;$i++$}{
		\For{$j=1$;$j\leq 2k$;$j++$}{
			\eIf{$j=i+k$}{
				$W[j][i]=1$\;
			}{
				$W[j][i]=0$\;
			}			
		}
		\For{$j=1$;$j\leq k$;$j++$}{
			\eIf{$j=i+k$}{
				$Y[j][i]=1$\;
			}{
				$Y[j][i]=0$\;
			}			
		}
		\tcp{$index[i]$ is the index of the first non-zero element from the left in the $i$-th row of $Y$}
		$index[i] = i$;
	}
	\For{$i=1$;$i\leq n$;$i++$}{
		\tcp{\textbf{Step2}: generate $W_{i}$ and $Y_{i}$}
		\For{$j=1$;$j\leq k$;$j++$}{
			$W[i \mod 2k][j]=-(x_{k-1}W[(i-1+2k) \mod 2k][j]+x_{k-2}W[(i-2+2k) \mod 2k][j]+\cdots+x_{-k}W[(i+1) \mod 2k][j])/x_{k}$\;
			$Y[i \mod k][j]= W[i \mod 2k][j]$\;
		}
		update $index[i \mod k]$\;
		$cur=i \mod k$\;
		\tcp{\textbf{Step3}: Gaussian elimination}
		\While{$\exists l, index[l]=index[cur]$}{
			\If{$l < cur \leq (i \mod n)$ or $(i \mod n) \ge l < cur$ or $l > (i \mod n)$, $cur \le (i \mod n)$}{
				exchange $l$ and $cur$\;
			}
			use $Y[l]$ to perform Gaussian elimination on $Y[cur]$\;
			update $index[cur]$\;
		}
		\eIf{$\exists l$, $index[l]=0$}{
			$R[i]=0$\;			
		}{
			$R[i]=1$\;
		}
	}
	return R\;		
	\caption{calculation for the invertibility sequence of banded Toeplitz matrices}
\end{algorithm}

We continuously remove the first row of $Y_{i}$ and add the last row of $W_{i+1}$ to construct $Y_{i+1}$. Since the first $k-1$ rows of $Y_{i+1}$ are already in quasi-row-echelon form, we need at most $k$ Gaussian eliminations to transform  $Y_{i+1}$ into quasi-row-echelon form, rather than $k^2$ eliminations. Since we always use the larger-index rows to perform Gaussian elimination on the small-index ones during the Gaussian elimination, the $j$-th row of $Y_{i+1}$ is only linearly expressed by the $j$-th to $k$-th rows of $W_{i+1}$. Therefore, when we use $W_{i+1}$ and  $Y_{i}$ to construct $Y_{i+1}$, we can remove the first row of  $Y_{i}$ and ensure that $Y_{i+1}$ and $W_{i+1}$ have the same invertibility, as shown in Eq. \ref{WY}, where $V_l$ is the $l$-th row of $W_i$ and function $L$ represents linear combination.

\begin{equation}
	\label{WY}
	W_i=\begin{bmatrix}
		V_1 \\
		V_2 \\
		\vdots \\
		V_k
	\end{bmatrix},
	Y_i=\begin{bmatrix}
		L(V_1,V_2,V_3,\cdots,V_k) \\
		L(V_2,V_3,\cdots,V_k) \\
		\vdots \\
		V_k
	\end{bmatrix}.
\end{equation}

Algorithm \ref{a1} shows the calculations in detail.

The time cost analysis of this algorithm is as follows:
\begin{itemize}
	\item \textbf{Step 1} (lines 1 to 17) is for initialization and involves no computation.
	\item \textbf{Step 2} (lines 19 to 24) calculates each element with a time cost of $2k+1$. Since there are $kn$ elements, the total time cost for this step is $2k^2n+kn$.
	\item \textbf{Step 3} (lines 25 to 36) requires at most $k$ Gaussian eliminations to transform each $Y$ into quasi-row-echelon form, with a time cost of $k$ for each cycle. As this while-loop is repeated $n$ times, the total cost for this step is $k^2n/2$.
\end{itemize}
In summary, the total time cost of the program is $5k^2n/2+kn$. This algorithm is more space-efficient: the size of $W$ is $2k*k$, and the size of $Y$ is $k*k$, therefore, the total space consumption of the algorithm is only $3k^2$.

\end{proof}

Relying on Theorem \ref{t1} and Algorithm \ref{a1}, we will be able to efficiently solve the invertibility of a class of banded Toeplitz matrices, providing a foundation for efficiently batch processing this class of banded Toeplitz matrices.

\section{Conclusion\label{s6}}

We equate the invertibility of an $n$-order banded Toeplitz matrix with bandwidth $2k+1$ to that of a small $k*k$ matrix. This allows us to calculate the invertibility sequence of a class of banded Toeplitz matrices with a time complexity of $5k^2n/2+kn$. This algorithm can be applied in solving equation systems and inverses of banded Toeplitz matrices.

\section*{Acknowledgments}
This study is financed by Tianjin Science and Technology Bureau, finance code: 21JCYBJC00210.

\section*{Declaration of generative AI}
Generative AI is only used for translation and language polishing in this paper.




\bibliographystyle{elsarticle-harv}
\bibliography{ref}







\end{document}